\DeclareMathOperator{\ima}{{Im}}
\DeclareMathOperator{\End}{{End}}
\newcommand{\cA}{\mathcal{A}}
\newcommand{\cW}{\mathcal{W}}
\newcommand{\RR}{\mathbb R}
\newcommand{\QQ}{\mathbb Q}
\newcommand{\ZZ}{\mathbb Z}
\newcommand{\CC}{\mathbb C}
\begin{document}

\title{On the group algebra decomposition of a Jacobian variety\thanks{
Partially supported by Fondecyt Grant 1100113 and Conicyt Fellowship for PhD. studies}
}

\titlerunning{On the group algebra decomposition of JX}        

\author{Leslie Jim\'enez}


\institute{Departamento de Matem\'aticas, Facultad de Ciancias, Universidad de Chile, Santiago \at
              Las palmeras 3425, \~Nu\~noa \\
              Tel.: +56-2-29787288\\
              \email{leslie.jimenez@liu.se}\\
              \emph{Present address: Link\"{o}pings universitet, 581 83 LINK\"{O}PING.}  
}

\date{Received: date / Accepted: date}

\maketitle

\begin{abstract}
Given a compact Riemann surface $X$ with an action of a finite group $G$, the group algebra $\QQ[G]$ provides an isogenous decomposition of its Jacobian variety $JX$, known as the group algebra decomposition of $JX$. We obtain a method to concretely build a decomposition of this kind. Our method allows us to study the geometry of the decomposition. For instance, we build several decompositions in order to determine which one has kernel of smallest order. We apply this method to families of trigonal curves up to genus $10$.
	
\keywords{Jacobians and decomposable Jacobians and Riemann surfaces and Group algebra decomposition}
\subclass{Primary 14H40; Secondary 14H30}
\end{abstract}

\section{Introduction}\label{intro}
The action of a finite group on a given compact Riemann surface $X$ of genus $g\geq 2$ induces a homomorphism $\rho:\QQ[G]\to \text{End}_{\QQ}(JX)$ from the rational group algebra $\QQ[G]$ into the rational endomorphism algebra of $JX$ in a natural way. The factorization of $\QQ[G]$ into a product of simple algebras yields a decomposition of $JX$ into abelian subvarieties \cite{l-r}, \cite{kr}, up to isogeny.  

This decomposition, and in general Jacobians with group action, have been extensively studied from different points of view \cite{serre}, \cite{l-r}, \cite{lro}, \cite{paulhus}, \cite{paulhus2}, \cite{lrro}, \cite{brr}, \cite{cr}, \cite{sevin}, \cite{yoibero}, \cite{kr}, \cite{accola1}.  

In \cite{paulhus2} the decomposition of Jacobians of hyperelliptic curves in elliptic factors is studied. The author developed a nice geometrical description for these Jacobians up to genus $10$. Her motivation came from \cite{serre} where they asked for completely decomposable Jacobians of any dimension $g$ (They gave examples up to $g=1297$). In \cite{yoibero} an explicit formula to calculate the dimension of the factors in the decomposition of $JX$ is given and the polarizations of the subvarieties in the decomposition of $JX$ are studied in \cite{lro} and \cite{lrro}. In general, the kernel of the decomposition of $JX$ has not been studied. The only references treating kernels we know are: \cite{reci-ro} for Jacobians with action of the symmetric group of order 3, \cite{naka} where the author studies families of curves whose Jacobians are isomorphic to a product of elliptic curves and \cite{crro} where the authors study dihedral actions on Jacobians, but the tools used to compute kernels are different from the method developed here.

In this work, we present a method to concretely build an isogeny which is a group algebra decomposition (Section \ref{factoresjacobianas}). We do this deepening the method developed in \cite{lro}. This allows us to describe the lattices of the factors in a group algebra decomposition. Moreover, we find a method to determine the order of the kernel of this isogeny. We give a specific criterion to choose the subvarieties in a group algebra decomposition having a kernel of smallest possible order. In particular, we can decide when the isogeny is an isomorphism. 

We apply our method to non-normal trigonal curves (see \cite{w1}) and normal trigonal curves of genus $g < 10$ with reduced group $A_4, S_4$ and $A_5$. In these cases, we determine the factors of the decomposition such that the size of the kernel is the minimum possible (Section \ref{s:results}). The description of the lattices of the factors in the decomposition of $JX$ provides an alternative explanation of the subspace of the loci of Jacobians of trigonal curves inside the moduli space $\cA_g$ of principally polarized abelian varieties by means of the Riemann matrix of the decomposition (Remark \ref{moduli}).

\section{Preliminaries}\label{prelimi}

Let $G$ be a finite group. The known results about the representations of $G$ used in this section may be founded in \cite{serre1} and \cite{curtis}. 

If $V$ is an irreducible representation of $G$ over $\CC$, we denote $F$ as its field of definition and $K$ the field obtained by extending $\QQ$ by the values of the character $\chi_V$; then $K\subseteq F$ and $m_V=[F:K]$ is the Schur index of $V$. 

If $H$ is a subgroup of $G$, $\text{Ind}_{1_H^G}$ will denote the representation of $G$ induced by the trivial representation of $H$ and $\langle U,V \rangle$ denotes the usual inner product of the characters. By the Frobenius Reciprocity Theorem $\langle \text{Ind}_{1_H^G},V\rangle=\text{dim}_\CC V^H$, where $V^H$ is the subspace of $V$ fixed by $H$.

Any compact Riemann surface $X$ of genus $g$ has an associated principally polarized abelian variety $JX$ (i.e. a complex tori with a principal polarization). This variety is called \textit{the Jacobian variety of $X$} and has complex dimension $g$. Good accounts of abelian varieties and Jacobians are given in \cite{bl} and \cite{rubi}.

Given a compact Riemann surface $X$ with an action of a group $G$, we consider the induced homomorphism  $\rho:\QQ[G]\to \text{End}_\QQ(JX)$. For any element $\alpha \in \QQ[G]$ we define an abelian subvariety
\begin{equation}\label{B-alfa}
B_{\alpha} := \ima (\alpha)=\rho(l\alpha)(JX) \subset JX,
\end{equation}
where $l$ is some positive integer such that $l\alpha \in \ZZ[G]$. 

As $\QQ[G]$ decomposes into a product $Q_0\times \dots \times Q_r$ of simple $\QQ-$algebras, the simple algebras $Q_i$ are in bijective correspondence with the rational irreducible representations of $G$. That is, for any rational irreducible representation $\cW_i$ of $G$ there is a uniquely determined central idempotent $e_i$. This idempotent defines an abelian subvariety, namely $B_i=B_{e_i}$. These varieties, called isotypical components, are uniquely determined by the representation $\cW_i$.

The addition map is an isogeny \cite[section 2]{l-r}.
\begin{equation}\label{iso}
\mu:B_0\times \dots \times B_r\to JX, 
\end{equation}
which is called \textit{the isotypical decomposition of $JX$}. 

Moreover, the decomposition of every $Q_i=L_1\times \dots \times L_{n_i}$ into a product of minimal left ideals (all isomorphic) gives a further decomposition of the Jacobian. There are idempotents $f_{i1},\dots, f_{in_i}\in Q_i$ such that $e_i=f_{i1}+\dots +f_{in_i}$ where $n_i=\text{dim} V_i/m_{V_i}$, with $V_i$ the $\CC-$irreducible representation associated to $\cW_i$. These idempotents provide subvarieties $B_{ij}:=B_{f_{ij}}$ of $JX$. 

It is known that the factor in the isotypical decomposition of $JX$ associated to the trivial representation of $G$ is isogenous to $J_G=J(X/G)$. This factor will be denoted by $B_0$. Then the addition map is an isogeny 

\begin{equation}\label{langerecillas}
\nu:J_G\times \Pi_1^{n_1}B_{1j}\times \dots \times \Pi_1^{n_r}B_{rj}\to JX.
\end{equation}

This is called \textit{the group algebra decomposition of $JX$} \cite{lrro}. We use this name to refer to the isogeny $\nu$ as well.
Note that since all the minimal left ideals decomposing $Q_i$ are isomorphic, which implies that the sub-varieties defined by the idempotents $f_{ij}$ are isogenous, we may write the group algebra decomposition as

\begin{equation}\label{nu-tilde}
\tilde{\nu}:J_G\times B_{11}^{n_1}\times \dots \times B_{r1}^{n_r}\to JX,
\end{equation}
which is the classical way of writing it. The problem with this is that one of our goals is to minimize the order of the kernel of $\nu$, and there are examples (for instance \cite{kft}) where we may even obtain isomorphisms by changing the components in the same isogeny class.
Therefore, we will stay with the decomposition (\ref{langerecillas}) because it will allow us to reduce the kernel of the isogeny. 

If two complex tori $B$ and $B'$ are isogenous, we write $B\sim B'$.

For group actions on a Riemann surface we follow the notation and definitions given in \cite{fk}. We define the group of automorphisms $\text{Aut}(X)$ of a Riemann surface $X$ as the analytical automorphism group of $X$. We say that a finite group $G$ acts on $X$ if $G\leq \text{Aut}(X)$. 
The quotient $X/G$ (the space of the orbits of the action of $G$ on $X$) is a compact Riemann surface with complex atlas given by the holomorphic branched covering $\pi_G: X\to X/G$. The degree of $\pi_G$ is $|G|$ and the multiplicity of $\pi_G$ at $p$ is $\text{mult}_p(\pi_G)=|G_p|$ for all $p\in X$, where $G_p$ denotes the stabilizer of $p$ in $G$. If $|G_p|\neq 1$ then $p$ will be a branch point of $\pi_G$.

Let $\{p_1,...,p_r\}\subset X$ be a maximal collection of non-equivalent branch points with respect to $G$. We will denote $\gamma=g(X/G)$.   
We define the \textit{the signature (or branching
data) of G on X} as the vector of numbers $(\gamma; m_1,...,m_r)$ where $m_i=|G_{p_i}|$. We have the Riemann-Hurwitz formula $g(X)=|G|(\gamma-1)+1+\frac{|G|}{2}\sum_{i=1}^{r}\big(1-\frac{1}{m_i}\big)$, where $g(X)$ denotes the genus of $X$. 

A $2\gamma+r$ tuple $(a_1, \dots, a_{\gamma},b_1, \dots,
b_{\gamma},c_1, \dots,c_r)$ of elements of $G$ is called a
\textit{generating vector of type $(\gamma;m_1,...,m_r)$} if the
following are satisfied:

i) $G$ is generated by the elements $(a_1, \dots, a_{\gamma},b_1,
\dots, b_{\gamma},c_1, \dots,c_r)$;

ii) $\text{order}(c_i)=m_i$; and

iii) $\prod_{i=1}^{\gamma}[a_i,b_i]\prod_{j=1}^rc_j=1$, where $[a_i,b_i]$ is the commutator of $a_i, b_i \in G$. 

The existence of a generating vector of given type ensures the existence of a Riemann surface with an action of a given finite group.  The dimension of the subvarieties in the decomposition (\ref{langerecillas}) are obtained using the generating vector of the action \cite[Theorem 5.12]{yoibero}.

Moreover, the induced action of $G$ on $JX=\CC^n/\text{H}_1(X,\ZZ)$ provides geometrical information about the components of the group algebra decomposition of $JX$ \cite{cr}. 

\begin{definition}\label{ph}
For any subgroup $H$ of $G$, define $p_H=\frac{1}{\vert H\vert}\sum_{h\in H} h$ as the central idempotent in $\QQ[H]$ corresponding to the trivial representation of $H$. Also, we define $f_H^{i}$ as $p_H e_{i}$, an idempotent element in $\QQ[G]e_{i}$.
\end{definition}

Then the corresponding group algebra decomposition of $J_H=J(X/H)$ is given as follows \cite[Proposition 5.2]{cr}:
\begin{equation}\label{carocca-rodriguez}
J_H \sim J_G\times B_{11}^{\frac{\text{dim} V_1^H}{m_1}}\times \dots \times B_{r1}^{\frac{\text{dim} V_r^H}{m_r}}.
\end{equation}
Moreover, 
\begin{equation}\label{phjxh}
\ima (p_H)=\pi_H^*(J_H)
\end{equation}
where $\pi_H^*(J_H)$ is the pullback of $J_H$ by $\pi_H$ (see \cite{bl}). If $\text{dim} V_i^H\neq 0$ then
\begin{equation}\label{fh}
\ima (f_H^i)=B_{i1}^{\frac{\text{dim} V_i^H}{m_{V_i}}}.
\end{equation}


\section{Describing the factors via a symplectic representation}\label{S:method}

We are interested in describing the factors of the group algebra decomposition of a Jacobian variety with group action given in the previous section.

The method we follow is to describe the lattice of such factors. We apply \cite[Section 2]{lro}, but extended to any symmetric idempotent $\alpha=\sum_{g\in G}a_gg$. 

Let $\Lambda=H_1(X,\ZZ)$ denote the lattice of $JX$ and $\Lambda_{\QQ}=\Lambda\otimes_{\ZZ} \QQ$. Let $\rho_s:\QQ[G]\to \End(\Lambda_{\QQ})$ denote the morphism induced by the (symplectic) rational representation of $G$ in $\Lambda$, $\rho:\QQ[G]\to \End_{\QQ}J X$ is the homomorphism given by the action of the group $G$ on $X$ and $\rho_r$ is the rational representation of $\text{End}_\QQ(JX)$ which completes the diagram below. 

\begin{center}
\newcounter{cms}
\setlength{\unitlength}{1cm}
\begin{picture}(7,7.5)\label{diag1}
\put(3.8,3.2){\vector(0,1){3.3}}
\put(2,4.9){\vector(1,1){1.6}}
\put(2,4.8){\vector(1,-1){1.7}}
\put(1.4,4.7){\makebox(0,0)[b]{$\QQ[G]$}}
\put(3.9,6.6){\makebox(0,0)[b]{$\End(\Lambda_{\QQ})$}}
\put(3.9,2.5){\makebox(0,0)[b]{$\End_\QQ(J X)$}}
\put(2.5,6.1){\makebox(0,0)[tl]{$\rho_s$}}
\put(2.5,3.6){\makebox(0,0)[b]{$\rho$}}
\put(4,5){\makebox(0,0)[tl]{$\rho_r$}}
\end{picture}
\end{center}
\vspace{-2cm}

Thus $\rho_r$ induces a rational representation of $\text{Hom}_\QQ(\Pi_{ij}B_{ij}, JX)$ given by $\nu\to \nu_{\Lambda}:\oplus_{ij} \Lambda_{ij}\to \Lambda,$
where $\nu_{\Lambda}$ is the restriction of $\nu$ to the lattice $\oplus_{ij} \Lambda_{ij}$ of the product of the $B_{ij}$ in $\nu$. 

Then $\rho_s(\alpha)\in \End\Lambda_{\QQ}$ is given by $\rho_s(\alpha)=\sum_{g\in G}a_g\rho_s(g).$

Therefore we have the following facts, analogous to \cite[section 2]{lro}.

\begin{proposition}\label{lattice}
Let $\alpha\in \QQ[G]$. The sublattice of $\Lambda$ defining $B_{\alpha}=V_{\alpha}/\Lambda_{\alpha}$ is given by

$$\Lambda_{\alpha}:=\rho_s (\alpha)(\Lambda_{\QQ})\cap \Lambda,$$
where the intersection is taken in $\Lambda_{\QQ}$ and $\rho_s (\alpha)$ is the image of $\alpha$ by $\rho_s$. In this case, the $\CC-$vector space $V_{\alpha}$ is generated by $\rho_s (\alpha)(\Lambda_{\QQ})$.
\end{proposition}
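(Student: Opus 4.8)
The plan is to reduce the statement to the standard description of the image of a homomorphism of complex tori, and then transport that description to the lattice using the factorization $\rho_s=\rho_r\circ\rho$ coming from the diagram. First I would fix a positive integer $l$ with $\beta:=l\alpha\in\ZZ[G]$, so that $\rho(\beta)$ is a genuine endomorphism of $JX=V/\Lambda$ (with $V=\CC^{g}$ the tangent space at the origin) and, by (\ref{B-alfa}), $B_{\alpha}=\ima(\rho(\beta))=\rho(\beta)(JX)$. Writing $\rho_a(\beta)$ for the analytic representation of $\rho(\beta)$, i.e. the $\CC$-linear endomorphism of $V$ inducing $\rho(\beta)$, I would invoke the fact that the image of a homomorphism of complex tori is the subtorus whose universal cover is $\rho_a(\beta)(V)$ and whose lattice is this subspace intersected with $\Lambda$. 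Thus, a priori,
\[
V_{\alpha}=\rho_a(\beta)(V),\qquad \Lambda_{\alpha}=\rho_a(\beta)(V)\cap\Lambda,
\]
which already exhibits $V_\alpha$ as a complex subspace and reduces the proposition to rewriting these two objects in terms of $\rho_s$.

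Next I would use the diagram: by construction $\rho_s=\rho_r\circ\rho$, so $\rho_s(\beta)$ is precisely the rational representation $\rho_r(\rho(\beta))$ of the endomorphism $\rho(\beta)$, namely its action on $\Lambda_{\QQ}$. Under the canonical $\RR$-linear isomorphism $\Lambda_{\RR}=\Lambda\otimes_{\ZZ}\RR\cong V$, this rational representation is just the realification of $\rho_a(\beta)$; extending scalars to $\RR$ therefore gives $\rho_s(\beta)(\Lambda_{\RR})=\rho_a(\beta)(V)=V_{\alpha}$. Since $\rho_a(\beta)(V)$ is a complex subspace, it coincides with the $\CC$-span of the $\QQ$-subspace $\rho_s(\beta)(\Lambda_{\QQ})$. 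Finally, as $\rho_s$ is $\QQ$-linear and $l\,\Lambda_{\QQ}=\Lambda_{\QQ}$, I get $\rho_s(\beta)(\Lambda_{\QQ})=l\,\rho_s(\alpha)(\Lambda_{\QQ})=\rho_s(\alpha)(\Lambda_{\QQ})$, which proves the second assertion: $V_{\alpha}$ is generated over $\CC$ by $\rho_s(\alpha)(\Lambda_{\QQ})$.

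It remains to identify the lattice, and this is the step demanding the most care. I must show $\rho_a(\beta)(V)\cap\Lambda=\rho_s(\alpha)(\Lambda_{\QQ})\cap\Lambda$. The inclusion $\supseteq$ is immediate because $\rho_s(\alpha)(\Lambda_{\QQ})\subseteq V_{\alpha}$. For the reverse inclusion the crucial ingredient is the saturation fact that a $\QQ$-subspace $W$ of the $\QQ$-vector space $\Lambda_{\QQ}$ satisfies $(W\otimes_{\QQ}\RR)\cap\Lambda_{\QQ}=W$ inside $\Lambda_{\RR}$; applying it to $W=\rho_s(\alpha)(\Lambda_{\QQ})$, whose real span is $V_{\alpha}$, yields $V_{\alpha}\cap\Lambda_{\QQ}=\rho_s(\alpha)(\Lambda_{\QQ})$. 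Intersecting with $\Lambda\subseteq\Lambda_{\QQ}$ then collapses $V_{\alpha}\cap\Lambda$ to $\rho_s(\alpha)(\Lambda_{\QQ})\cap\Lambda$, as required. The only genuine obstacle is this saturation step, for without it $\rho_s(\alpha)(\Lambda_{\QQ})$ is merely some $\QQ$-form of $V_{\alpha}$ that could conceivably be proper; I would settle it cleanly by extending a $\QQ$-basis of $W$ to a basis of $\Lambda_{\QQ}$ and comparing coordinates. Everything else is a routine unwinding of definitions parallel to \cite[Section 2]{lro}.
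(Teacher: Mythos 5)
Your proof is correct and follows exactly the framework the paper sets up (the diagram $\rho_s=\rho_r\circ\rho$ together with the standard description of the image of a torus homomorphism as $\rho_a(\beta)(V)/\bigl(\rho_a(\beta)(V)\cap\Lambda\bigr)$); the paper itself gives no argument, deferring to \cite[Section 2]{lro}, so you are supplying the details it omits. The one step that genuinely needs care, the saturation identity $(W\otimes_{\QQ}\RR)\cap\Lambda_{\QQ}=W$ for the $\QQ$-subspace $W=\rho_s(\alpha)(\Lambda_{\QQ})$, is handled correctly by your basis-extension argument.
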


The previous construction is clearer in its matrix form, once bases are chosen. From here to the end of this work, we use this form for determining the lattice of the factors in $\nu$.

\begin{remark}\label{obs-lattice}
Suppose we have $\Gamma=\{ \alpha_1, \dots, \alpha_{2g}\}$ a (symplectic) basis of the lattice $\Lambda$ of the Jacobian $JX$ (assumed to be of dimension $g$). Consider  $\rho_s$ in its matrix form (with respect to this basis). 

For any $g\in G$, we have that $\rho_s(g)$ is a square matrix of size $2g$. Hence, for any $\alpha \in \QQ[G]$, we have associated to it a rational $2g \times 2g$-matrix 
$$M = (m_{ij}).$$
The $j$-th column of $M$ corresponds to the element $\rho_s(g)(\alpha_j)=\sum_{i=1}^{2g} m_{ij} \alpha_i$ and the lattice $\Lambda_{\alpha}$ of $B_{\alpha}$ corresponds to

$$\Lambda_{\alpha} := (\langle M \rangle_{\ZZ} \otimes \QQ) \cap \Lambda,$$
where $\langle M \rangle_{\ZZ}$ denotes the lattice over $\ZZ$ generated by the columns of $M$. 

In other words, the lattice $\Lambda_{\alpha}$ is obtained by considering the $\RR$-linearly independent columns of $M$ and intersecting it with $\Lambda$. Our next step is to look for a basis of $\Lambda_{\alpha}$, which will be in terms of the elements of $\Gamma$. By computing its coordinates in the basis $\Gamma$, we get a $2g\times 2\dim B_{\alpha}$ coordinate matrix of the lattice $\Lambda_{\alpha}$. Moreover, $V_{\alpha}$ is the complex vector space generated by the column vectors of $M$.
\end{remark}

\section[Method]{Method. A group algebra decomposition $\nu_{\times}$}\label{factoresjacobianas}

In this section we develop the core of this work. We present a method to concretely build an isogeny $\nu$ as in \eqref{langerecillas}. 

Given a compact Riemann surface $X$ with the action of a group $G$, the general theory gives us the existence of a group algebra decomposition for the corresponding Jacobian variety $JX$. The results in \cite{yoibero} allow us to compute the dimensions of the factors. Nevertheless to describe further geometrical properties such as induced polarization, period matrix, etc., we need an explicit description of the factors. Section \ref{S:method} gives us a tool to solve some of these questions, under certain hypotheses for $G$. 

We present here a method to find a set of primitive idempotents $f_{i1},\dots, f_{in_i}$ to describe the factors in this decomposition, in order to extract properties of the decomposition. This concrete construction will allow us to easily compute the order of the kernel of the isogeny $\nu$. Hence we may choose an \textit{optimal set} of those idempotents in the sense of getting the smallest possible kernel.

We consider the quotient of $X$ for the action of $G$ of genus $0$ for simplicity, because it is known that the factor in $\nu$ corresponding to the trivial representation is the image of $p_G$, hence it is isogenous to $J_G$.

\vskip12pt

\noindent{\textbf{Data:}} Let $X$ be a Riemann surface of genus $g\geq 2$ with the action of a group $G$ with total quotient of genus $0$. Assume that the symplectic representation $\rho_s$ for this action is known.
\vskip12pt

\textbf{1. STEP ONE:} Identification of factors using Jacobians of intermediate coverings.\\

The following lemma gives us conditions under which a factor in the group algebra decomposition can be described as the image of a concrete idempotent, in particular when it corresponds to a Jacobian of an intermediate quotient.

\begin{lemma}\label{metodo}
Let $X$ be a Riemann surface with an action of a finite group $G$ such that the genus of $X_G$ is equal to zero. Consider $\nu$, the group algebra decomposition of $JX$ as in \eqref{langerecillas}.  
\begin{itemize}
\item[(i)] If $H\leq G$ is such that $\dim_\CC V_i^{H}=m_i$, where $m_i$ is the Schur index of the representation $V_i$, then for some $j\in\{1, ..., n_{i}\}$ we have that
$$\ima (f_H^i)=B_{ij}.$$
In addition, 
\item[(ii)] if $\dim_\CC V_l^{H} = 0$ for all $l$, $l\neq i$, 
such that $\dim_\CC B_l\neq 0$ in the isotypical decomposition of $JX$ in Equation (\ref{iso}), then 
$$J_H\sim \ima (p_H)=B_{ij}.$$
\end{itemize}
\end{lemma}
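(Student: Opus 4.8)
The plan is to derive (i) from the identity \eqref{fh} and then to derive (ii) from (i) by decomposing $p_H$ along the central idempotents, the only genuinely new ingredient being a rank count that makes $f_H^i$ primitive. For (i) I would first record that $f_H^i=p_He_i$ is an idempotent of $Q_i=\QQ[G]e_i$: since $e_i$ is central it commutes with $p_H$, so $(p_He_i)^2=p_H^2e_i^2=p_He_i$. Substituting the hypothesis $\dim_\CC V_i^H=m_i=m_{V_i}$ into \eqref{fh} gives $\ima(f_H^i)=B_{i1}^{m_i/m_{V_i}}=B_{i1}$, a single factor. To upgrade this to the equality $\ima(f_H^i)=B_{ij}$ with one of the predetermined factors (rather than a mere isogeny with $B_{i1}$), I would check that $f_H^i$ is a primitive idempotent of $Q_i$. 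On $V_i$ the central idempotent $e_i$ acts as the identity and $p_H$ as the orthogonal projection onto $V_i^H$, so $f_H^i$ acts as the projection onto $V_i^H$ and $\dim_\CC\ima_{V_i}(f_H^i)=\dim_\CC V_i^H=m_i$. On the other hand, writing $e_i=f_{i1}+\dots+f_{in_i}$ with $n_i=\dim_\CC V_i/m_i$ mutually conjugate primitive idempotents, each summand has image of dimension $\dim_\CC V_i/n_i=m_i$ on $V_i$. Thus $f_H^i$ has exactly the image-dimension of a primitive idempotent, which forces its rank in $Q_i\cong M_{n_i}(D_i)$ to equal that of a primitive idempotent; hence $f_H^i$ is primitive and may be taken as one of the $f_{ij}$, giving $\ima(f_H^i)=B_{ij}$ for some $j$.

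For (ii) I would expand $p_H$ over the central idempotents. Since $1=e_0+\dots+e_r$ with the $e_l$ central and orthogonal, $p_H=\sum_{l=0}^r p_He_l=\sum_{l=0}^r f_H^l$ is a sum of orthogonal idempotents (indeed $f_H^lf_H^{l'}=\delta_{ll'}f_H^l$), so inside $JX$ one has $\ima(p_H)=\sum_{l=0}^r\ima(f_H^l)$. The term $l=i$ equals $B_{ij}$ by (i). Every other term vanishes: if $l\neq i$ and $\dim_\CC V_l^H\neq 0$, then by hypothesis $\dim B_l=0$, so $\ima(f_H^l)\subseteq B_l=0$; while if $\dim_\CC V_l^H=0$, then $f_H^l$ projects onto the zero fixed space and $\ima(f_H^l)=0$. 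This last case subsumes the trivial representation, for which $\dim V_0^H=1$ but $B_0\sim J_G=0$ because $g(X_G)=0$. Hence $\ima(p_H)=\ima(f_H^i)=B_{ij}$. Finally \eqref{phjxh} gives $\ima(p_H)=\pi_H^*(J_H)$, and the pullback along the covering $\pi_H\colon X\to X/H$ is an isogeny onto its image; therefore $J_H\sim\pi_H^*(J_H)=\ima(p_H)=B_{ij}$, as claimed.

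The step I expect to be the crux is the primitivity of $f_H^i$ in part (i): converting the numerical hypothesis $\dim_\CC V_i^H=m_i$ into the structural statement that $f_H^i$ is a rank-one idempotent of $Q_i$ is precisely what promotes the conclusion from an isogeny $\ima(f_H^i)\sim B_{i1}$ to the identification of the image with one of the genuine factors $B_{ij}$. The remainder is bookkeeping with \eqref{fh}, \eqref{phjxh} and the orthogonality of the $f_H^l$. The one point requiring a little care is the vanishing $f_H^l=0$ when $\dim_\CC V_l^H=0$: strictly, $f_H^l$ acts by zero not only on $V_l$ but on all its Galois conjugates, since the trivial character of $H$ is rational and hence $\dim(V_l^\sigma)^H=\dim V_l^H$ for every conjugate, so $f_H^l$ vanishes in $Q_l\otimes_\QQ\CC$ and therefore in $Q_l$.
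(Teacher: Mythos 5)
Your proof is correct and follows essentially the same route as the paper's: part (i) is read off from \eqref{fh} with the hypothesis $\dim_\CC V_i^H=m_i$ forcing the exponent to be $1$, and part (ii) decomposes $p_H$ along the central idempotents and identifies $\ima(p_H)$ with $J_H$. Your treatment is in fact slightly more careful than the paper's at two points the paper glosses over --- the primitivity of $f_H^i$ justifying that its image is one of the designated factors $B_{ij}$, and the handling of the indices $l$ (including $l=0$) for which $f_H^l\neq 0$ as an algebra element but $\ima(f_H^l)=0$ in $JX$ --- but these are refinements of the same argument, not a different one.
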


\begin{proof}
From Proposition \ref{lattice} and the fact that $\dim_\CC V_i^{H}\neq 0$, we get  $\ima (f_H^{i}) = \Pi_i^{k_i}B_{ij}$. Since $\dim_\CC V_i^{H}=m_i$ (by hypothesis), we obtain that $k_i=1$. Hence $\ima (f_H^{i}):=B_{ij}$ for some $j\in \{1, ..., n_{i}\}$.

If, in addition, $\dim_\CC V_l^{H} = 0$ (equivalently $\dim_\QQ \cW_l^H=0$) for all $l\neq i$ such that $\dim_\CC B_l\neq 0$, then $f_H^l=0$ for all of them. Due to the fact that $p_H=\sum_{l\in\{1,...,r\}} f_H^l$ we obtain that $p_H=f_H^i$. Moreover, by equation (\ref{carocca-rodriguez}) we have $J_H\sim \ima (p_H)=B_{ij}.$
\end{proof}

\begin{remark}\label{hconjugate}
Observe that if $H$ satisfies Lemma \ref{metodo} for some $i\in \{1,...,r\}$, then all its conjugates satisfy it for the same $i$. To see this, consider $H'=H^g=gHg^{-1}$. It is clear that for all $s\in G$, we have
 
\begin{eqnarray*}
\dim_\CC V_i^H&=&\langle\text{Ind}_{1_H^G}, V_i\rangle=\frac{1}{|G|}\sum_{t\in G}\chi_{\text{Ind}_{1_H^G}}(t)\chi_{V_i}(t^{-1})\\
&=&\frac{1}{|G|}\sum_{t\in G}\Big(\sum_{s^{-1}ts\in H}\chi_{1_H}(s^{-1}ts)\Big)\chi_{V_i}(t^{-1})\\
&=&\frac{1}{|G|}\sum_{t\in G}|H|\chi_{V_i}(t^{-1})=\frac{1}{|G|}\sum_{t\in G}|H'|\chi_{V_i}(t^{-1})\\
&=&\frac{1}{|G|}\sum_{t\in G}\Big(\sum_{s^{-1}ts\in H'}\chi_{1_{H'}}(s^{-1}ts)\Big)\chi_{V_i}(t^{-1})\\
&=&\dim_\CC V_i^{H'}.\\
\end{eqnarray*} 
\end{remark}

Our purpose is to use this result \textit{conversely} to actually produce an isogeny $\nu$.

\vskip12pt

\textbf{STEP TWO:} Definition of certain subvarieties of $JX$.



\vskip12pt

\begin{definition}\label{D:B_ij}
Let $H$ be a subgroup of $G$ satisfying condition (i) of Lemma \ref{metodo} for some $i\in \{1,...,r\}$, then define $B_H$ as the image of $f_H=p_He_i$.
\end{definition}

Note that depending on the geometry of the action, $B_H$ can be trivial.

From Proposition \ref{lattice}, we get that its lattice corresponds to

\begin{equation}\label{latticeph}
\Lambda_{H} := (\langle \rho_s (f_H) \rangle_{\ZZ} \otimes \QQ) \cap H_1(X,\ZZ).
\end{equation}


Using the procedure described in Remark \ref{obs-lattice}, we obtain the coordinate matrix corresponding to a basis of the lattice of $B_{H}$. We sometimes use the same symbol $\Lambda_H$ to denote this matrix.

\begin{corollary}
Let $H$ be a subgroup of $G$ satisfying both conditions of Lemma \ref{metodo}. Then $f_H=p_He_i=p_H$, and $B_H$ is isogenous to the Jacobian of $X/H$.

\end{corollary}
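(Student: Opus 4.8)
The plan is to unwind the definitions and combine the two conditions of Lemma~\ref{metodo} directly. First I would recall that by Definition~\ref{D:B_ij} we have $f_H = p_H e_i$, where $H$ satisfies condition (i), so that $B_H = \ima(f_H)$ is a single factor $B_{ij}$. The goal is to show that under the additional condition (ii), the idempotent $f_H$ collapses to $p_H$ itself, and that the resulting subvariety is isogenous to $J(X/H)$.

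Next I would invoke the decomposition $p_H = \sum_{l=1}^{r} f_H^l$ recorded in the proof of Lemma~\ref{metodo}. Condition (ii) states that $\dim_\CC V_l^H = 0$ for every $l \neq i$ with $\dim_\CC B_l \neq 0$; as observed in that proof, this forces $f_H^l = 0$ for all such $l$. For the indices $l$ with $\dim_\CC B_l = 0$ the corresponding isotypical factor is trivial, so $f_H^l$ contributes nothing to the image either. Hence every term in the sum except the $i$-th vanishes (as an operator on $JX$), giving $f_H = p_H e_i = f_H^i = p_H$. This is essentially the identity $p_H = f_H^i$ established inside the proof of Lemma~\ref{metodo}, now reused in our ``converse'' setting.

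Finally, with $f_H = p_H$ in hand, I would apply Equation~\eqref{phjxh}, which identifies $\ima(p_H) = \pi_H^*(J_H)$, together with the general fact that the pullback $\pi_H^*(J_H)$ is isogenous to $J_H = J(X/H)$. Combining these yields $B_H = \ima(f_H) = \ima(p_H) \sim J(X/H)$, which is exactly the claim. Since the second half of Lemma~\ref{metodo} already records $J_H \sim \ima(p_H) = B_{ij}$, the corollary is really just the statement that the two conditions together make the idempotent defining $B_H$ equal to $p_H$.

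The only subtlety worth flagging is bookkeeping about the trivial isotypical factors: condition (ii) as stated restricts the vanishing of $\dim_\CC V_l^H$ to those $l$ with $\dim_\CC B_l \neq 0$, so I must separately argue that the remaining indices do not obstruct the equality $f_H = p_H$ at the level of subvarieties. This is immediate because those $f_H^l$ act on trivial factors and hence have trivial image, but it should be spelled out rather than assumed. No genuine obstacle arises here; the corollary is a direct consequence of Lemma~\ref{metodo} and Equation~\eqref{phjxh}.
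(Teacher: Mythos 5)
Your proof is correct and follows essentially the same route as the paper, which simply cites Lemma \ref{metodo} and the identity $p_H=f_H^i$ established in its proof, then uses Equation \eqref{phjxh} to identify $\ima(p_H)$ with (a variety isogenous to) $J_H$. Your extra bookkeeping about the indices $l$ with $\dim_\CC B_l=0$ — where $f_H^l$ need not vanish in $\QQ[G]$ but acts trivially on $JX$ — is a worthwhile clarification of a point the paper's own argument passes over silently, but it does not change the approach.
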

\begin{proof}

By Lemma \ref{metodo}, if $H$ satisfies both conditions then the Jacobian of $X/H$ is isogenous to one of the factors in a group algebra decomposition for $JX$. This is equivalent to the equality of their corresponding idempotents.
\end{proof}




\vskip12pt

\textbf{3. STEP THREE:} Construction of a product subvariety $B_{\times}$ of $JX$.

\vskip12pt

If we have \textit{enough} subgroups from STEP TWO, we may construct the product of all the subvarieties defined by those subgroups. This will be a subvariety of $JX$, its lattice is described in the following definition.

\begin{definition}\label{matrizLph}
Let $r+1$ be the number of rational irreducible representations of $G$. Suppose for all $i\in \{1,\dots,r\}$ and all $j=\{1,\dots, n_i\}$ there is a subgroup $H_{ij}$ satisfying condition (i) of Lemma \ref{metodo}. For each $i,j$ take one $H_{ij}$, and let 
$$S=\{H_{ij}:i\in \{1,\dots, r\},j\in \{1,\dots, n_i\}\}$$
be the set of these subgroups, where we do not consider subgroups $H_{ij}$ such that $\ima(f_{H_{ij}})=0$.
We define $L_{S} \in M_{2g}(\ZZ)$ to be the coordinate matrix given by the vertical join of the coordinate matrices of the lattices $\Lambda_{H_{ij}}$ (see equation \ref{latticeph}) for $H_{ij}\in S$.

We recall here that $i=0$ corresponds to the trivial representation whose factor is not considered here, $n_i=\dim V_i/m_i$, where $m_i$ is the Schur index of a complex irreducible representation $V_i$ associated to the rational irreducible representation corresponding to the factor $B_{ij}^{n_i}$ (from \eqref{langerecillas}).
\end{definition}

The lattice $\Lambda_{\times}$ defined by the matrix $L_{S}$, corresponds to the sublattice $\oplus_{ij} \Lambda_{H_{ij}}$ of $\Lambda=\text{H}_1(X,\ZZ)$. It is the lattice of the following subvariety of $JX$ 

$$B_{\times}:=\Pi_{i,j} B_{H_{ij}},$$
where $B_{H_{ij}}$ is as in Definition \ref{D:B_ij}.

\vskip12pt

\begin{definition}\label{D:isogenyLeslie}
With the above notation define a sum map $\nu_{\times}:B_{\times}\to JX$ 

$$\nu_{\times}(b_{11},\dots, b_{rn_r})=\sum_{i,j}b_{ij}\in JX.$$
\end{definition}

\vskip12pt

\textbf{STEP FOUR:} Condition for $\nu_{\times}$ to be an isogeny.

\vskip12pt

\begin{definition}\label{effectiveset}
Let $S=\{H_{ij}:i=1..r, j=1..n_i\}$ be a set of subgroups of $G$ as in Definition \ref{matrizLph}. We say that $S$ is an \textit{effective set for $G$} if the determinant of the corresponding matrix $L_{S}$ is different from $0$.  
\end{definition}

\begin{theorem}\label{T:isogeny} 
Let $X$ be a Riemann surface of genus $g\geq 2$ with an action of a group $G$ with total quotient of genus $0$.
Let $S=\{H_{ij}\}_{ij}$ be an effective set for $G$, then the map $\nu_{\times}$ definition \ref{D:isogenyLeslie} is an isogeny with kernel of order $|\det(L_{S})|$.
\end{theorem}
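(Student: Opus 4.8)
The plan is to show that $\nu_\times: B_\times \to JX$ is a surjective homomorphism of abelian varieties with finite kernel, and then identify the order of that kernel with $|\det(L_S)|$. The two abelian varieties have the same dimension: by Theorem 5.12 of \cite{yoibero} the dimensions of the factors $B_{ij}$ in the group algebra decomposition $\nu$ of \eqref{langerecillas} add up to $g = \dim JX$ (recall that the genus-$0$ hypothesis on $X/G$ kills the $J_G$ factor, so $B_\times = \Pi_{i,j}B_{H_{ij}}$ accounts for the entire remaining decomposition). Since an effective set by Definition \ref{effectiveset} is indexed exactly by the pairs $(i,j)$ with $i \in \{1,\dots,r\}$, $j \in \{1,\dots,n_i\}$, and each $B_{H_{ij}}$ equals the correct factor $B_{ij}$ (up to reordering within an isogeny class) by Lemma \ref{metodo}(i), we get $\dim B_\times = \sum_{i,j}\dim B_{ij} = g$. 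A homomorphism between abelian varieties of the same dimension is an isogeny precisely when it is surjective, equivalently when it has finite kernel, equivalently when the induced map on rational lattices is an isomorphism.

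\textbf{The lattice computation.} The heart of the argument is to pass to homology. By Proposition \ref{lattice} and Remark \ref{obs-lattice}, each factor $B_{H_{ij}}$ has lattice $\Lambda_{H_{ij}} \subset \Lambda = \mathrm{H}_1(X,\ZZ)$, and $B_\times$ has lattice $\oplus_{ij}\Lambda_{H_{ij}}$, which in coordinates with respect to the fixed symplectic basis $\Gamma$ is the lattice spanned by the rows (or columns) of $L_S$. The sum map $\nu_\times$ induces on homology the map $(\nu_\times)_\Lambda : \oplus_{ij}\Lambda_{H_{ij}} \to \Lambda$ sending a tuple of basis vectors to their sum in $\Lambda$; in matrix form this is exactly multiplication by $L_S$, viewed as a map $\ZZ^{2g}\to\ZZ^{2g}$, since $L_S$ is by Definition \ref{matrizLph} the $2g\times 2g$ integer matrix whose blocks are the coordinate matrices of the $\Lambda_{H_{ij}}$. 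Thus $(\nu_\times)_\Lambda$ is represented by the square integer matrix $L_S$.

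\textbf{Conclusion and the finite-kernel count.} By hypothesis $S$ is effective, i.e.\ $\det(L_S)\neq 0$, so $(\nu_\times)_\Lambda$ is injective with image a finite-index sublattice of $\Lambda$; tensoring with $\QQ$ it becomes an isomorphism of $\QQ$-vector spaces, which shows that $\nu_\times$ is surjective with finite kernel, hence an isogeny. For the order of the kernel I would invoke the standard fact (see \cite{bl}) that for an isogeny of abelian varieties the degree, i.e.\ the order of the kernel, equals the index $[\Lambda : (\nu_\times)_\Lambda(\oplus_{ij}\Lambda_{H_{ij}})]$ of the image lattice, and this index is precisely $|\det(L_S)|$ by the elementary-divisor (Smith normal form) description of the cokernel $\ZZ^{2g}/L_S\ZZ^{2g}$. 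Putting these together gives $|\ker\nu_\times| = |\det(L_S)|$.

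\textbf{The main obstacle} is not the linear algebra, which is routine once the setup is in place, but the bookkeeping that guarantees $L_S$ is genuinely square of size $2g$ and that the $\ZZ$-span of its blocks is exactly $(\nu_\times)_\Lambda(\oplus_{ij}\Lambda_{H_{ij}})$ with no loss of a factor. One must verify that the number of basis vectors contributed by all the $\Lambda_{H_{ij}}$ equals $2g$ --- equivalently $\sum_{ij} 2\dim B_{H_{ij}} = 2g$ --- which relies on each $B_{H_{ij}}$ being a \emph{single} factor $B_{ij}$ (this is exactly where condition (i) of Lemma \ref{metodo}, forcing the multiplicity $k_i=1$, is essential) together with the dimension formula summing to $g$. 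Subgroups with $\ima(f_{H_{ij}})=0$ are excluded from $S$ in Definition \ref{matrizLph}, so no degenerate blocks appear; and the identification of the homology-level map with multiplication by $L_S$ must be done carefully so that the degree of the isogeny is read off correctly as $|\det(L_S)|$ rather than, say, its square or some polarization-twisted variant.
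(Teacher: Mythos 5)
Your proposal is correct and follows essentially the same route as the paper's proof: pass to the lattice level, identify the induced map on $\mathrm{H}_1$ with the square matrix $L_S$ (so that effectiveness, i.e.\ $\det L_S\neq 0$, gives a finite-index sublattice and hence an isogeny), and read off the kernel order as the lattice index $[\Lambda:\Lambda_\times]=|\det(L_S)|$. Your explicit attention to the bookkeeping that makes $L_S$ square of size $2g$ is a welcome elaboration of a point the paper treats more tersely, but it is not a different argument.
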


\begin{proof}
As before, denote by $\Lambda$ the lattice of $JX$. The map $\nu_{\times}$ induces a homomorphism of $\ZZ-$modules $\nu_{\Lambda}:\Lambda_{\times} \to \Lambda.$ If the rank of $\Lambda_{\times}$ is $2g$, then $\nu_{\Lambda}$ is a monomorphism of lattices in $\CC^g$. Moreover, all the sublattices $\Lambda_{H_{ij}}$ decomposing $\Lambda_{\times}$ correspond to subvarieties $B_{H_{ij}}$. Therefore the dimension of $B_{\times}$ is $g$.

It remains to show either $\nu_{\times}$ is surjective or its kernel is of finite order. For any isogeny $f: A_1\rightarrow A_2$ between two abelian varieties, it is known \cite[section 1.2]{bl} that

\begin{equation}\label{def-ker-matrix}
|\text{Ker}(f)| = \det \rho_r(f),
\end{equation}
where $\rho_r(f)$ is the rational representation of the isogeny $f$. It is known that if the kernel is finite, then its cardinality equals the index  
$[\Lambda:\Lambda_{\times}]$. As the matrix $L_S$ is non singular, we have that $\Lambda_{\times}$ is a lattice in $\CC^g$, hence this index is finite. After columns operations, the matrix $L_{S}$ is the matrix of the rational representation $\nu_{\times}$, therefore its kernel has order the absolute value of its determinant. 
\end{proof}

\begin{corollary}\label{isomorfismo}
Under the hypothesis in Theorem \ref{T:isogeny}, the isogeny $\nu_{\times}$ is an isomorphism if and only if $\det(L_{\{H_{ij}\}})=\pm1$. 
\end{corollary}

\begin{remark}
\begin{enumerate}

\item The isogeny from Theorem \ref{T:isogeny} corresponds to a group algebra decomposition isogeny $\nu$ as in \eqref{langerecillas}.



\item We point out that the isogeny $\nu_{\times}$ depends on the choice of the subgroups $H_{ij}$ for the set $S$. Therefore, its kernel may change if we change these subgroups. Our purpose is to move along different effective sets in order to achieve the smallest possible.
\end{enumerate}
\end{remark}

In the spirit of moving along different effective sets to minimize the order of the kernel, we have the following proposition (see \cite{yo} for a proof and more details).

\begin{proposition}\label{ph-conjugate}
If $H_2=gH_1g^{-1}$ for some element $g\in G$, then
\begin{itemize}
\item[(i)] $p_{H_2}=gp_{H_1}g^{-1}$,
\item[(ii)] $\ima(p_{H_2})$ is isomorphic to $\ima(p_{H_1})$. 
\end{itemize}
\end{proposition}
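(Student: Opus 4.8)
The plan is to prove (i) by a direct computation inside the group algebra $\QQ[G]$, and then deduce (ii) by transporting the resulting identity through the homomorphism $\rho$ and exploiting that $g$ acts on $JX$ as an automorphism.

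For part (i), I recall from Definition \ref{ph} that $p_{H_1} = \frac{1}{|H_1|}\sum_{h\in H_1} h$, and I would compute
\begin{equation*}
g\, p_{H_1}\, g^{-1} = \frac{1}{|H_1|}\sum_{h\in H_1} g h g^{-1}.
\end{equation*}
Since conjugation by $g$ is a bijection $H_1 \to H_2 = gH_1g^{-1}$ (in particular $|H_1| = |H_2|$), the element $ghg^{-1}$ runs exactly once over $H_2$ as $h$ runs over $H_1$. Hence the right-hand side equals $\frac{1}{|H_2|}\sum_{h'\in H_2} h' = p_{H_2}$, which gives (i). This step is purely formal.

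For part (ii), I would apply the $\QQ$-algebra homomorphism $\rho:\QQ[G]\to \End_\QQ(JX)$ to the identity in (i). Choosing the common integer $l=|H_1|=|H_2|$, so that $l p_{H_1}, l p_{H_2}\in\ZZ[G]$, part (i) yields $l p_{H_2} = g\,(l p_{H_1})\,g^{-1}$ in $\ZZ[G]$, and therefore
\begin{equation*}
\rho(l p_{H_2}) = \rho(g)\,\rho(l p_{H_1})\,\rho(g)^{-1}
\end{equation*}
as endomorphisms of $JX$. Since $g\in G\leq \Aut(X)$ induces an automorphism $\rho(g)\in\Aut(JX)$ with inverse $\rho(g^{-1})=\rho(g)^{-1}$, we have $\rho(g)^{-1}(JX)=JX$. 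Using the definition $\ima(p_H)=\rho(l p_H)(JX)$ from \eqref{B-alfa}, I would then compute
\begin{equation*}
\ima(p_{H_2}) = \rho(l p_{H_2})(JX) = \rho(g)\,\rho(l p_{H_1})\bigl(\rho(g)^{-1}(JX)\bigr) = \rho(g)\bigl(\ima(p_{H_1})\bigr).
\end{equation*}

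It remains to observe that $\rho(g)$ restricts to an isomorphism of abelian varieties $\ima(p_{H_1}) \xrightarrow{\sim} \ima(p_{H_2})$: being an automorphism of $JX$, the map $\rho(g)$ is injective on every abelian subvariety, and its inverse $\rho(g)^{-1}=\rho(g^{-1})$ carries $\ima(p_{H_2})$ back onto $\ima(p_{H_1})$, so the restriction is an isomorphism (and not merely an isogeny). This yields (ii). The only point requiring genuine care is this last one — verifying that one obtains an \emph{isomorphism} rather than an isogeny — which is precisely where the invertibility of $\rho(g)$, as opposed to that of a general endomorphism coming from $\QQ[G]$, is essential.
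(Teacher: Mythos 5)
Your proof is correct. The paper itself does not spell out an argument for Proposition \ref{ph-conjugate} (it defers to the thesis \cite{yo}), but your reasoning --- the formal conjugation computation for (i), followed by pushing the identity through $\rho$ and observing that $\rho(g)$ is a genuine automorphism of $JX$ carrying $\ima(p_{H_1})$ onto $\ima(p_{H_2})$, hence an isomorphism and not merely an isogeny --- is exactly the standard argument intended, and you have correctly handled the one delicate point (working with $l\,p_H\in\ZZ[G]$ so that the images are honest subvarieties).
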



\begin{theorem}\label{pol}
Let $X$ be a Riemann surface of genus $g$ with an action of a finite group $G$ such that the genus of $X/G$ is zero. If $S=\{H_{i1},...,H_{in_i}\}$ is an effective set for $\nu$, then the induced polarization $E_{ij}$ of the factor $B_{ij}$ in $\nu$ is given by 
\begin{equation}\label{pola}
E_{ij} = \Lambda_{H_{ij}} E_{J X} \Lambda_{H_{ij}}^{t}.
\end{equation}
Moreover, the type of the polarization $E_{ij}$ is given by the elementary divisors of the matrix product (\ref{pola}). 
\end{theorem}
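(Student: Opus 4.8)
The plan is to compute the induced polarization on each factor $B_{ij}$ by pulling back the principal polarization $E_{JX}$ along the lattice embedding described in Proposition \ref{lattice} and Remark \ref{obs-lattice}. Recall that the principal polarization of $JX$ is given, with respect to the symplectic basis $\Gamma$ of $\Lambda = H_1(X,\ZZ)$, by a fixed integral alternating matrix $E_{JX}$ (the standard symplectic form). The factor $B_{ij}$ is $V_{H_{ij}}/\Lambda_{H_{ij}}$, and by Remark \ref{obs-lattice} the lattice $\Lambda_{H_{ij}}$ comes equipped with a coordinate matrix (also denoted $\Lambda_{H_{ij}}$) expressing a basis of $\Lambda_{H_{ij}}$ in terms of the basis $\Gamma$ of $\Lambda$.

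First I would recall the general principle (see \cite{bl}): if $\iota : B \hookrightarrow A$ is an abelian subvariety with $A$ carrying a polarization given by the Riemann form $E_A$, then the restriction of $E_A$ to the sublattice defining $B$ is the induced (restricted) polarization on $B$. Concretely, if the lattice of $B$ is spanned by columns (or rows) collected in a coordinate matrix $M$ relative to the basis of the lattice of $A$, then the alternating form on the lattice of $B$ is obtained by evaluating $E_A$ on pairs of these basis vectors, i.e. it is the matrix $M\, E_A\, M^{t}$. Applying this with $A = JX$, $E_A = E_{JX}$, and $M = \Lambda_{H_{ij}}$ the coordinate matrix of the lattice of $B_{ij}$ yields precisely
\begin{equation*}
E_{ij} = \Lambda_{H_{ij}}\, E_{JX}\, \Lambda_{H_{ij}}^{t},
\end{equation*}
which is the asserted formula \eqref{pola}. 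I would then check that this bilinear form is indeed a polarization on $B_{ij}$: it is alternating because $E_{JX}$ is alternating and $M E_{JX} M^t$ is alternating for any $M$; it is integral because both $\Lambda_{H_{ij}}$ and $E_{JX}$ are integral matrices; and it is nondegenerate (a genuine polarization rather than merely a degenerate form) because $B_{ij}$ is an abelian subvariety of the polarized variety $JX$, so the restriction of the polarization to it remains a polarization. This is where the hypothesis that $S$ is an effective set enters: it guarantees that the $\Lambda_{H_{ij}}$ have full rank as lattices of the corresponding factors, so that the restriction is well-defined and nondegenerate on each $B_{ij}$.

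For the final assertion about the type of the polarization, I would invoke the standard fact that the type of a polarization given by an integral alternating matrix is read off from its elementary divisors: any integral alternating matrix can be brought, by a change of symplectic basis (an integral, unimodular, i.e.\ $\mathrm{SL}$-type, transformation on each side), into the block form $\left(\begin{smallmatrix} 0 & D \\ -D & 0 \end{smallmatrix}\right)$ with $D = \mathrm{diag}(d_1,\dots,d_k)$ and $d_1 \mid d_2 \mid \cdots \mid d_k$, and the $d_\ell$ are exactly the elementary divisors of the matrix. Since the elementary divisors are invariant under the unimodular transformations that realize the symplectic normal form, they are intrinsic to $E_{ij}$, and hence the type of the induced polarization is given by the elementary divisors of the product $\Lambda_{H_{ij}}\, E_{JX}\, \Lambda_{H_{ij}}^{t}$, as claimed. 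I expect the main obstacle to be the careful bookkeeping of conventions—namely making sure the coordinate matrix $\Lambda_{H_{ij}}$ acts on the correct side (rows versus columns) so that $M E_{JX} M^t$ rather than $M^t E_{JX} M$ gives the restriction, and confirming that the restriction of the polarization to the abelian subvariety $B_{ij}$ is the object classically called the induced polarization; both are routine once the normalizations from Section \ref{S:method} are fixed.
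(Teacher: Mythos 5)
The paper states Theorem \ref{pol} without any proof at all, treating it as an immediate consequence of the lattice description in Proposition \ref{lattice} and Remark \ref{obs-lattice} together with the standard theory of induced polarizations from \cite{bl} (and \cite{lro}); your argument is precisely that standard argument --- restriction of the Riemann form via the coordinate matrix, followed by the Frobenius normal form to read off the type from the elementary divisors --- and it is correct. Two minor points: the nondegeneracy (indeed positive definiteness of the associated hermitian form) of the restricted polarization is automatic for any nonzero abelian subvariety of a polarized abelian variety, so the effective-set hypothesis is not needed for that step; its real role is to guarantee that the $B_{H_{ij}}$ actually occur as the factors $B_{ij}$ of a group algebra decomposition $\nu$, so that the phrase ``the induced polarization of the factor $B_{ij}$ in $\nu$'' applies to them. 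You are also right that the convention stated in Remark \ref{obs-lattice} (a $2g\times 2\dim B_{\alpha}$ coordinate matrix) is transposed relative to the one actually used in \eqref{pola} and in the worked example, where $\Lambda_{H_{ij}}$ has the basis vectors as rows.
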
 

\begin{remark}\label{O:filosofia}
Finally we point out that the principle idea of our method is to move along isomorphic varieties $B_H$, choosing different subgroups $H$ (even in the same conjugacy class) to construct the effective set $S$ and the corresponding isogenies $\nu_\times$, which may have kernels of different orders. This shows that the geometry of the varieties $B_H$ as subvarieties of $JX$ makes a difference. In fact the examples suggest that this reflects the way the subvarieties intersect each other. 
\end{remark}


\section{Application to trigonal curves}\label{s:results}

In this section we use the method explained in Section \ref{S:method}, the computational program MAGMA \cite{magma} and the algorithm introduced in \cite{brr}. To obtain the size of the kernel we use the full automorphism group of the curves. We study the Jacobians of families of trigonal curves up to genus $g < 10$. We divide our examples according to the genus (or the dimension) $g$. 

\subsection{Known facts about $3-$gonal curves and their automorphisms}\label{S:kr}

A compact Riemann Surface $X$ admitting a cyclic prime group of automorphisms $C_3$ of order $3$ such that $X/C_3$ has genus $0$ is called a cyclic $3-$gonal surface or a \textit{trigonal curve}. The group $C_3$ is called a $3-$gonal group for $X$. If $C_3$ is normal in the full automorphism group $\text{Aut}(X)$ of $X$, we call $X$ a normal cyclic $3-$gonal surface or a \textit{normal trigonal curve}. In this case, there exist only one $3-$gonal group for $X$. The quotient group $N_{\text{Aut}(x)}(C_3)/C_3$ between the normalizer $N_{\text{Aut}(x)}(C_3)$ of $C_3\leq\text{Aut}(X)$ and $C_3$ is called \textit{the reduced group of X}. It is well known \cite[section IV.9.3]{fk}, \cite[Table 1]{w2} that the reduced group of $X$ can be the cyclic group $C_n$ of order $n$, the dihedral group $D_n$ of order $2n$, the symmetric group $S_4$, the alternating groups $A_4$ or $A_5$.

A consequence of \cite[Lemma 2.1]{accola1} is the following result:

\begin{lemma}\label{normalg5}
If $X$ is a trigonal surface of genus $g\geq 5$, then $X$ is a normal trigonal curve.
\end{lemma}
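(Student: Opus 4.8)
The plan is to reduce the normality of the $3$-gonal group $C_3 \leq \text{Aut}(X)$ to the \emph{uniqueness} of the trigonal pencil, and then to deduce this uniqueness from the Castelnuovo--Severi inequality, which is the content of \cite[Lemma 2.1]{accola1}. Recall that since $X/C_3$ has genus zero, the quotient map $\pi\colon X \to X/C_3 \cong \PP^1$ is a cyclic Galois covering of degree $3$ whose deck group is exactly $C_3$; its fibers form a base-point-free linear system $g^1_3$, the trigonal pencil of $X$. A second $3$-gonal group would furnish a second such pencil, so the geometric heart of the statement is that for $g \geq 5$ the $g^1_3$ is unique.

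First I would record the reduction. Suppose $X$ carries a unique $g^1_3$. Every $\sigma \in \text{Aut}(X)$ acts on the set of linear systems of that degree and dimension, so by uniqueness $\sigma$ preserves the trigonal pencil; hence $\sigma$ carries fibers of $\pi$ to fibers of $\pi$ and descends to an automorphism $\bar\sigma \in \text{Aut}(\PP^1)$ with $\pi \circ \sigma = \bar\sigma \circ \pi$. For any $c \in C_3$ and $x \in X$ one then computes
$$\pi(\sigma c \sigma^{-1} x) = \bar\sigma\big(\pi(c\,\sigma^{-1} x)\big) = \bar\sigma\big(\pi(\sigma^{-1} x)\big) = \bar\sigma \bar\sigma^{-1} \pi(x) = \pi(x),$$
using that $c$ is a deck transformation, so that $\pi \circ c = \pi$. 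Thus $\sigma c \sigma^{-1}$ is again a deck transformation of $\pi$, and since the deck group of the Galois covering $\pi$ is precisely $C_3$, we conclude $\sigma C_3 \sigma^{-1} \subseteq C_3$, i.e. $C_3$ is normal in $\text{Aut}(X)$. This shows that uniqueness of the pencil already forces $X$ to be a normal trigonal curve.

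It remains to prove uniqueness of the $g^1_3$ for $g \geq 5$, and this is where I would invoke \cite[Lemma 2.1]{accola1}. Concretely, suppose $X$ had two distinct trigonal pencils given by degree-$3$ maps $f_1, f_2 \colon X \to \PP^1$. Because $3$ is prime, the maps either coincide up to an automorphism of $\PP^1$ (the same pencil), or they are \emph{independent}, meaning that the product map $(f_1,f_2)\colon X \to \PP^1 \times \PP^1$ is birational onto its image: any common factor $X \to C$ of degree $d>1$ would have to satisfy $d \mid 3$, forcing $d=3$ and hence identifying the two pencils. In the independent case the Castelnuovo--Severi inequality gives
$$g \leq (3-1)(3-1) + 3\cdot 0 + 3 \cdot 0 = 4,$$
contradicting $g \geq 5$. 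Hence the two pencils coincide and the $g^1_3$ is unique; combined with the reduction above, this yields the lemma.

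The main obstacle is precisely this uniqueness step: verifying that two genuinely distinct $g^1_3$'s force $g \leq 4$, which amounts to a correct application of the Castelnuovo--Severi inequality together with the primality argument that excludes a nontrivial common intermediate covering. Once uniqueness is established, the descent-and-deck-transformation argument producing normality is entirely routine.
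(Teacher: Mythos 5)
Your proof is correct and follows the same route the paper takes: the paper simply states the lemma as ``a consequence of \cite[Lemma 2.1]{accola1}'' without further argument, and your write-up supplies exactly the standard derivation behind that citation (uniqueness of the $g^1_3$ for $g\geq 5$ via the Castelnuovo--Severi inequality, then descent of automorphisms to $\PP^1$ to conclude normality of the deck group $C_3$). Both the primality argument ruling out a nontrivial common intermediate covering and the conjugation computation identifying $\sigma C_3\sigma^{-1}$ with the deck group are sound, so nothing is missing.
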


The previous result allows us to find a list of automorphism groups of trigonal curves (see also \cite{mila-auto}). This list is obtained by an easy combination of \cite[Table 7]{w1}, \cite[Table 1]{buja} and \cite{magaard}, plus the computation of the reduced groups which are not in the original tables. We group the results in Tables \ref{T:1} and \ref{T:2}. Table \ref{T:1}, corresponds to non-normal trigonal curves. $\text{CD}$ denotes the central diagonal subgroup of $SL(2,3)$ of order $2$. Table \ref{T:2}, corresponds to normal trigonal curves with reduced group $A_4, S_4$ or $A_5$ \cite{buja}. We restrict to these reduced groups in the normal case mainly because the results in \cite{paulhus2} suggest that these families may have completely decomposable Jacobians (at least for some dimensions of $JX$). From Table \ref{T:2} we obtain the result that any trigonal curve with reduced group $S_{4},A_4$ or $A_5$ has even genus, and that there do not exist trigonal curves of genus 8 with reduced group $A_4, S_4$ or $A_5$.   

Note that in both tables we use the MAGMA notation ID to label the automorphism groups. This is denoted by a ordered pair which in the first entry is the size of the group and in the second entry is which group in MAGMA database it is.

\begin{table}
\begin{tabular}{|l|c|c|c|r|}\hline
Red. group & Automorphism group & ID & Genus & Signature \\ \hline
$D_2$ & $GL(2,3)$&(48,29)& 2 & (0;2,3,8)\\
$C_4$ & $SL(2,3)/CD$&(48,33)&3 & (0;2,3,12)\\
$D_3$ & $D_3\times D_3$&(36,10)&4 & (0;2,2,2,3)\\
$D_6$ & $(C_3\times C_3)\rtimes D_4$ & (72,40)& 4 & (0;2,4,6) \\ \hline
\end{tabular}

\vskip6pt
\caption{Full automorphism group for non-normal trigonal curves.}
\label{T:1}
\end{table}

\begin{table}
\begin{tabular}{|l|c|c|c|r|}\hline
Red. group & Automorphism group & ID & Genus & Signature\\ \hline
$A_4$ & $C_3\times A_4$& (36,11)& 12s-2, $s>0$ &  (0;2,3,3,$3^{s}$)\\ 
$A_4$ & $C_3\times A_4$& & 12s+4 &  (0;6,3,3,$3^{s}$)\\ \hline
$A_4$ & $(C_2\times C_2) \rtimes C_9$& (36,3)& 12s+6 & (0;2,9,9,$3^{s}$)\\ 
$A_4$ & $(C_2\times C_2) \rtimes C_9$& & 12s+12 & (0;6,9,9,$3^{s}$)\\ \hline
$S_4$ & $C_3\times S_4$& (72,42)& 24s-2, $s>0$ & (0;2,3,4,$3^{s}$)\\ 
$S_4$ & $C_3\times S_4$& & 24s+4 &  (0;2,3,12,$3^{s}$)\\ 
$S_4$ & $C_3\times S_4$& & 24s+16 &  (0;6,3,12,$3^{s}$)\\ 
$S_4$ & $C_3\times S_4$& & 24s+10 &  (0;6,3,4,$3^{s}$)\\ \hline
$S_4$ & $C_3\rtimes S_4$& (72,43)& 24s-2, $s>0$ &  (0;6,3,12,$3^{s}$)\\ \hline
$S_4$ & $((C_2\times C_2) \rtimes C_9)\rtimes C_2$& (72,15)& 24s+6 & (0;2,9,4,$3^{s}$)\\ \hline
$A_5$ & $C_3\times A_5$& (180,19)& 60s-2, $s> 0$ & (0;2,3,5,$3^{s}$)\\ 
$A_5$ & $C_3\times A_5$& & 60s+10 & (0;2,3,15,$3^{s}$)\\ 
$A_5$ & $C_3\times A_5$& & 60s+40 & (0;6,3,15,$3^{s}$)\\ 
$A_5$ & $C_3\times A_5$& & 60s+28 & (0;6,3,5,$3^{s}$)\\ \hline
\end{tabular}

\vskip6pt
\caption{Full automorphism group for normal trigonal curves}\label{T:2}
\end{table}

\vskip6pt
\noindent{\bf Genus $2$}

Let $X_2$ be the 3-gonal curve of genus two admitting the action of $GL(2,3)= \langle a,b:a^{8}=b^3=(ab)^2=ba^{-3}ba^{-3}=1\langle$ (see the first row in the Table \ref{T:1}). This curve is known as the Bolza curve with equation $y^2=x(x^4-1)$. The generating vector of the action is $(a,b,(ab))$ of type $(0;8,3,2)$.
\vskip6pt
\noindent{\bf Genus $3$}

The genus $3$ surface in Table \ref{T:1}, corresponds to the curve with plane model $y^4=x^3-1$ (see \cite[Table 2]{magaard}) with action of $SL(2,3)/CD$. We consider the presentation $SL(2,3)/CD= \langle a,b:a^{12}=b^3=(ab)^2=a^{11}b^{-1}aba^{-1}ba^{-7}=1\rangle$. The generating vector of the action is $(a,b,ab)$ of type $(0;12,3,2)$.  
\vskip6pt
\noindent{\bf Genus $4$}

The genus $4$ surfaces in Table \ref{T:1}, correspond to the case studied in \cite{accola1} and \cite{mila-strata}. The surfaces admit four actions of $C_3$; two conjugate with quotients of genus $0$ and two non conjugate with quotients of genus $2$. Moreover, the one dimensional locus, corresponding to surfaces of genus $4$ with automorphism group $D_3\times D_3$ consists of the curves with equation $ax^3y^3-(x^3+y^3)+a=0$, where $a\notin \{0, \pm 1, \infty\}$. This family contains the surface with action of $(C_3\times C_3)\rtimes D_4$ (see \cite[Table 4]{magaard}).  
Then, we use the action of $D_3\times D_3$. We consider the presentation $D_3\times D_3=\langle a,b,c|a^3=b^2=c^2=(abc)^2=a^2ca^{-1}bca^{-1}b^{-1}a^{-2}=a^2ca^{-1}bab^{-1}ac^{-1}a^{-1}=1\rangle$. The generating vector of the action is $(a,b,c,abc)$ of type $(0;3,2,2,2)$.

On the other hand, with respect to the groups in Table \ref{T:2}, we have that
$C_3\times A_4$ and $C_3\times S_4$ act on genus $4$. The group $C_3\times A_4$ is contained in $C_3\times S_4$ and we study the possible actions of both groups on this genus. Using \cite{magma} and \cite{brr}, we note that they act over the same surface given by the planar model $y^3=x(x^4-1)$ (\cite{pinto}). This result completes the one remaining case left open in \cite{accola1} which was studied later in \cite{magaard} and \cite{mila-strata}. Our result coincide with they obtained. The action of $A_4\times C_3$ (see Table \ref{T:2}) extends to the action of the group $S_4\times C_3=\langle a,b|a^{12}=b^3=(ab)^2=a^{11}ba^{-1}bab^{-1}ab^{-1}a^{-6}=1\rangle$ with generating vector $(a,b,ab)$ of type $(0;12,3,2)$ (see \cite{mila-strata}). 
\vskip6pt
\noindent{\bf Genus $6$}

The groups $((C_2\times C_2)\rtimes C_9)\rtimes C_2$ and $(C_2\times C_2)\rtimes C_9$ act on curves of genus $6$. The group $(C_2\times C_2)\rtimes C_9$ is contained in $((C_2\times C_2)\rtimes C_9)\rtimes C_2$ and we study their possible actions on this genus using \cite{magma} and \cite{brr}. Both groups act on the same trigonal curve \cite{magaard} with  planar model $y^3=x^8+14x^4+1$ \cite{shaska}. The group $((C_2\times C_2)\rtimes C_9)\rtimes C_2=\langle a,b|a^2=b^9=(ab)^{4}=ab^{3}ab^{3}=1\rangle$ acts on the curve with generating vector $(a,b,(ab)^{-1})$ of type $(0;2,9,4)$.

\begin{remark}
From Table \ref{T:2}, we know that there do not exist trigonal curves of genus 8 with reduced group $A_4, S_4$ and $A_5$.   
\end{remark}

\subsection{Application of the method to trigonal curves}\label{SS:results}

In this section we apply our method to trigonal curves. Let $E_j$ denotes an elliptic curve and $|\rm Kernel|$ denotes the smallest possible order for the kernel of $\nu_\times$, the isogeny defined in \ref{D:isogenyLeslie}.

\begin{table}\label{Tb-2}
\begin{tabular}{|l|c|c|c|c|c|c|c|c|c|r|}\hline

Order of the elements &  1 & 2 & 2 & 3 & 4 & 6  & 8 &  8\\ \hline
$V_1$   &  1 & 1 & 1 & 1 & 1 & 1  &1 &  1\\
$V_2$   &  1 & 1 &-1 & 1 & 1 & 1  &-1  &-1\\
$V_3$   &  2 & 2 & 0 &-1&  2 &-1 &  0 &  0\\
$V_4$   &  2& -2 & 0& -1 & 0 & 1 & $i\sqrt{2}$ & $-i\sqrt{2}$ \\
$V_5$   &  2& -2 & 0 &-1 & 0 & 1 &$-i\sqrt{2}$  & $i\sqrt{2}$ \\
$V_6$   &  3 & 3 & 1 & 0 &-1 & 0 & -1 & -1\\
$V_7$   &  3&  3 &-1 & 0 &-1 & 0 &1 &  1\\
$V_8$   &  4 &-4 & 0 & 1 & 0 &-1  & 0  & 0\\ \hline
\end{tabular}


\caption{Character Table of $GL(2,3)$.}
\end{table}

\begin{theorem}\label{T:3gonal}
Let $JX$ be the Jacobian variety of a trigonal curve $X$, of one of the types detailed below. Then $JX$ is completely decomposable, and a geometrical description of the decomposition is in the following tables.

\begin{itemize}
\item If $JX$ is the Jacobian variety of a non-normal trigonal curve, then we have the following results:

\vskip12pt

\rm 
\begin{center}
{\small
\hspace{-1cm}
\begin{tabular}{|c|c|c|c|c|c|c|}\hline
Red. & Automorphism & Genus & Decomposition & $|$Kernel$|$& Induced \\ 
group & group & of $JX$ & of $\nu_{\times}$ & & polarization\\ \hline
$D_2$ & $GL(2,3)$ &$2$&$E_1\times E_2$ & 1&$(2)$\\
$C_4$ & $SL(2,3)/CD$ &$3$&$E_1\times E_2\times E_3$ &4&$(2)$\\
$D_3$ & $D_3\times D_3$ &$4$&$E_1\times \dots \times E_4$ &9&$(2), (6), (2), (6)$\\
$D_6$ & $(C_3\times C_3)\rtimes D_4$ &$4$&$E_1\times \dots \times E_4$ &9&$(2)$\\ \hline
\end{tabular}
}
\end{center}


\item \textit{If $JX$ is the Jacobian variety of a trigonal curve with reduced group $A_4, S_4$ or $A_5$, then we have the following results:}

\vskip6pt

\rm 
{\small
\hspace{-0.9cm}
\begin{tabular}{|c|c|c|c|c|c|c|}\hline
Red. & Automorphism & Genus & Decomposition & $|$Kernel$|$& Induced \\ 
group & group & of $JX$ & of $\nu_{\times}$ & & polarization\\ \hline
$A_4$ & $C_3\times A_4$& 4 & $E_1\times E_2\times E_3\times E_4$ & 64& $(4), (3), (3), (3)$\\ 
$S_4$ & $C_3\times S_4$& 4 & $E_1\times E_2\times E_3\times E_4$ & 16& $(4)$\\ 
$S_4$ & $((C_2\times C_2)\rtimes C_9)\rtimes C_2$& 6 & $E_1\times \dots \times E_6$ & 64& $(4)$\\ \hline
\end{tabular}
}
\end{itemize}

\end{theorem}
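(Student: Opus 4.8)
The plan is to verify Theorem \ref{T:3gonal} case by case, applying the machinery of Section \ref{factoresjacobianas} to each of the seven curves listed in the two tables. Since each curve comes equipped with an explicit full automorphism group $G$, a presentation, and a generating vector of type $(0;m_1,\dots,m_r)$, the quotient $X/G$ has genus zero and the \textbf{Data} hypothesis of Theorem \ref{T:isogeny} is met once the symplectic representation $\rho_s$ is computed. First I would, for each curve, use the generating vector together with the algorithm of \cite{brr} (implemented in MAGMA \cite{magma}) to produce the integral symplectic representation $\rho_s:G\to \operatorname{Sp}(2g,\ZZ)$ on $\Lambda=H_1(X,\ZZ)$. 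With $\rho_s$ in hand, I would compute the character table of $G$ and the central idempotents $e_i\in\QQ[G]$ attached to the rational irreducible representations, and determine the dimensions of the factors $B_{ij}$ via \cite[Theorem 5.12]{yoibero}; the tables assert that every factor is an elliptic curve, so each $B_{ij}$ must have dimension one.

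The heart of each case is the construction of an effective set. For a fixed rational irreducible representation $\cW_i$ with complex realization $V_i$ and Schur index $m_i$, I would search the subgroup lattice of $G$ (computable in MAGMA) for subgroups $H$ satisfying condition (i) of Lemma \ref{metodo}, namely $\dim_\CC V_i^{H}=m_i$; by Frobenius reciprocity this is the inner product $\langle \operatorname{Ind}_{1_H^G},V_i\rangle$, so the search reduces to a finite character computation. For each chosen $H_{ij}$ I would form the idempotent $f_{H_{ij}}=p_{H_{ij}}e_i$ of Definition \ref{D:B_ij}, apply $\rho_s$, and extract via Remark \ref{obs-lattice} the coordinate matrix $\Lambda_{H_{ij}}$ of the lattice of $B_{H_{ij}}$. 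Stacking these matrices as in Definition \ref{matrizLph} yields $L_S\in M_{2g}(\ZZ)$, and Theorem \ref{T:isogeny} then identifies $\nu_\times$ as a group algebra decomposition isogeny whose kernel has order $|\det(L_S)|$. The entries in the $|$Kernel$|$ column are exactly these determinants, and the complete decomposability follows because all the factors are elliptic. Finally I would compute the induced polarizations by applying Theorem \ref{pol}: for each $B_{ij}$ the matrix product $\Lambda_{H_{ij}}E_{JX}\Lambda_{H_{ij}}^{t}$ has elementary divisors giving the polarization type recorded in the last column.

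The main obstacle I anticipate is not any single computation but the optimization built into the statement: the tables report the \emph{smallest possible} kernel order, so it is not enough to exhibit one effective set $S$. By Proposition \ref{ph-conjugate} and Remark \ref{hconjugate}, replacing a subgroup $H_{ij}$ by a conjugate $gH_{ij}g^{-1}$ leaves the isogeny class of $B_{H_{ij}}$ unchanged but alters how its lattice sits inside $\Lambda$, hence alters $\det(L_S)$. Following Remark \ref{O:filosofia}, I would therefore enumerate, for each index $i$ where a choice exists, the conjugacy classes of admissible subgroups and the different conjugate representatives, compute $|\det(L_S)|$ for the resulting effective sets, and select the minimizer. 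Certifying minimality is the delicate point: in principle one must range over all effective sets, so I would argue that the reported value is a lower bound by a divisibility or index argument on $[\Lambda:\Lambda_\times]$, or simply verify by exhaustive search over the finite collection of admissible subgroup tuples that no smaller nonzero determinant occurs; for the small groups in Tables \ref{T:1} and \ref{T:2} this exhaustion is feasible in MAGMA. In the two cases where the kernel is trivial ($\det L_S=\pm1$), Corollary \ref{isomorfismo} upgrades $\nu_\times$ to an isomorphism, which I would record explicitly.
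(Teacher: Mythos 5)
Your proposal follows essentially the same route as the paper: compute the symplectic representation via \cite{brr} and MAGMA, locate subgroups satisfying Lemma \ref{metodo} by the Frobenius-reciprocity character computation, stack the coordinate matrices $\Lambda_{H_{ij}}$ into $L_S$, read off the kernel order from $|\det(L_S)|$ via Theorem \ref{T:isogeny}, vary over conjugates to minimize it, and apply Theorem \ref{pol} for the polarization types --- which is exactly how the paper treats the $GL(2,3)$ case in detail before deferring the remaining cases to \cite{yo}. Your explicit attention to certifying minimality of the kernel by exhausting the finite collection of admissible subgroup tuples is, if anything, slightly more careful than the paper's presentation, but it is the same method.
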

\vspace{0.2cm}

\begin{proof}
We will show here the techniques applied to the group $GL(2,3)$ in order to limit the size of the matrices. The rest of the cases are proved following the same procedure. The reader is referred to \cite{yo} for explicit calculations.

We use our method, presented in Chapter 2. All the computations were made in the software package MAGMA \cite{magma}. The subgroups we use to obtain the effective set giving the decomposition satisfy both conditions of Lemma \ref{metodo}. Hence, the factors that will define $B_\times$ in the isogeny $\nu_\times$ will correspond to Jacobian varieties of intermediate coverings. 

Let $X_2: y^2=x(x^4-1)$ be the Bolza curve described in the case of genus 2 viewed before. A generating vector for this action is $(a,b,(ab))$ of type $(0;8,3,2)$. Using equation (\ref{langerecillas}) we obtain that the Jacobian variety $JX$ associated to $X$ is completely decomposable i.e. isogenous to a product of elliptic curves. In fact, the group algebra decomposition isogeny is $\nu: B_{41}\times B_{42}\sim JX,$ where the product $B_{41}\times B_{42}$ is invariant by the irreducible rational representation $V_4$ of degree 2 (see Table 3). 

To apply our method to find an explicit decomposition $\nu_{\times}$, we first look for two subgroups $H_1$ and $H_2$ of $GL(2,3)$ satisfying conditions of Lemma \ref{metodo}. To find these subgroups, we use the symplectic representation of the action obtained from the method given in \cite{brr}. This allows us to write every element of $GL(2,3)$ as a square symplectic matrix of size 4.  

We determine next the complex irreducible representation decomposition of the induced representation $\text{Ind}_H^G 1_H$ in $GL(2,3)$ by the trivial representation in $H$ for each $H\leq G$ (See Table 4). We know that this decomposition of $\text{Ind}_H^G 1_H$ into $\CC-$irreducible representations is invariant under conjugation (see Remark \ref{hconjugate}). Table \ref{ind-2} shows the multiplicity of each complex irreducible representation in the induced representation  $\text{Ind}_H^G 1_H$, for all $H\leq G$ up to conjugacy.

\begin{table}\label{ind-2}
\begin{tabular}{|l|c|c|c|c|c|c|c|c|c|c|r|}\hline
Classes of subgroups & $V_1$ &$V_2$ &$V_3$ &$V_4$ &$V_5$ &$V_6$ &$V_7$ &$V_8$\\ \hline
Identity element& 1& 1& 2& 2& 2& 3& 3& 4\\
order 2, length 1& 1& 1& 2& 0& 0& 3& 3& 0\\
order 2, length 12&1& 0& 1& 1& 1& 2& 1& 2\\
order 3, length 4&1& 1& 0& 0& 0& 1& 1& 2\\
order 4, length 3&1& 1& 2& 0& 0& 1& 1& 0\\
order 4, length 6&1& 0& 1& 0& 0& 2& 1& 0\\
order 6, length 4&1&1& 0& 0& 0&1& 1& 0\\
order 6, length 4&1& 0& 0& 0& 0& 1& 0& 1\\
order 6, length 4&1& 0& 0& 0& 0& 1& 0& 1\\
order 8, length 1&1& 1& 2& 0& 0& 0& 0& 0\\
order 8, length 3&1& 0& 1& 0& 0& 0& 1& 0\\
order 8, length 3&1& 0& 1& 0& 0&1& 0& 0\\
order 12, length 4&1& 0& 0& 0& 0& 1& 0& 0\\
order 16, length 3&1& 0& 1& 0& 0&0& 0& 0\\
order 24, length 1&1& 1&0& 0& 0&0& 0& 0\\
order 48, length 1&1& 0& 0& 0& 0& 0& 0& 0\\ \hline
\end{tabular}
\caption{Decomposition of the induced representation by the trivial one on each class of conjugation of subgroups of $GL(2,3)$.}
\end{table}

We observe that the only conjugacy class of subgroups of $GL(2,3)$ whose elements satisfy the conditions in Lemma \ref{metodo} consists of the class of $H=\langle ab\rangle$. For each subgroup $H$ in this class 
$$\langle \text{Ind}_H^G 1_H,V_4\rangle=1.$$


\noindent By Lemma \ref{metodo}, each factor in the decomposition of $JX$ is defined by $B_{4j} = \ima(p_{H_j}) = V_{H_j}/\Lambda_{H_j}$, where $H_j$ is some subgroup in the class and $j\in \{1,2\}$. We obtain 
$\nu_{\times}:B_{41}\times B_{42}\to J X$, and its kernel depends on the choice of $H_j$ in this class.

To give an explicit description of the subgroups of $GL(2,3)$ giving the smallest order for the kernel of $\nu_{\times}$, we consider the above presentation of $GL(2,3)$ and the same generating vector $(a,b,ab)$. Set $H=\langle ab\rangle$, and consider the following subgroups of order 2 in $GL(2,3)$
\begin{eqnarray*}
H_1&=& H^{b}=\;\langle ba^7b\rangle,\\
H_2&=& H=\;\langle ab\rangle,
\end{eqnarray*}
and write $B_{41} = \ima(p_{H_1})$ and $B_{42} = \ima(p_{H_2})$. Since 

$$\rho_s(p_{H_1})= \left(\begin{array}{cccc} 0 &  0 & 0 & 0\\
1/2 & 1 & 0 & 0\\
0 & 0 & 0 & 1/2\\
0 & 0 & 0 & 1\end{array}\right), \; \rho_s(p_{H_2})= \left(\begin{array}{cccc} 1/2 & 1/2 & 0 & 0\\
1/2 & 1/2 & 0 & 0\\
0 & 0 & 1/2 & 1/2\\
0 & 0 & 1/2 & 1/2\end{array}\right),$$
and the coordinate matrices of their lattices are
$$\Lambda_{H_1}= \left(\begin{array}{cccc} 0 & 1 & 0 & 0\\
0 & 0 & 1 & 2\end{array}\right), \;\Lambda_{H_2}= \left(\begin{array}{cccc} 1 & 1 & 0 & 0\\
0 & 0 & 1 & 1\end{array}\right),$$
we find that the matrix coordinate of the lattice of the product $B_{41}\times B_{42}$ is given by

$$L_{\{H_1,H_2\}}= \left(\begin{array}{cccc} 0 & 1 & 0 & 0\\
0 & 0 & 1 & 2\\
1 & 1 & 0 & 0\\
0 & 0 & 1 & 1\end{array}\right).$$
In this case, choosing $H_1$ and $H_2$ as before $\nu_{\times}:B_{41}\times B_{42}\to JX$ is an isomorphism. Hence $|\text{Ker}(\nu)| = 1$. Note that we do not claim that the subgroups yielding an isomorphism are unique. In fact, there exist other subgroups in the same class such that the $|\text{Ker}(\nu_{\times})|=1$. 
\end{proof}

\begin{remark}\label{moduli}
Finally, combining Theorem \ref{T:3gonal} and the classification of \cite{magaard}, we may describe part of the loci of Jacobians of trigonal curves for dimension $g\leq 6$.
\\
\rm
\begin{center}
\begin{tabular}{|c|c|c|}\hline
$g$ & Dimension of the family & Locus description \\ \hline
$2$ & 0& one curve with action of $\text{GL}(2,3)$ \\ 
$3$ & 0& one curve with action of $\text{SL}(2,3)/\text{CD}$ \\ 
$4$ & 1& 1-dimensional with action of $D_3\times D_3$ \\
& & and one curve with action of $C_3\times S_4$ \\ 
$6$ & 0& one curve with action of $((C_2\times C_2)\rtimes C_9)\rtimes C_2$ \\ \hline
\end{tabular}
\end{center}
\vspace{0.2cm}

\end{remark}

\begin{acknowledgements}
This article is part of my PhD. thesis written under the direction of Professor Anita Rojas at the Universidad de Chile. I am very grateful to Professor Rojas for sharing, patiently and kindly, her knowledge, experiences and advice. I would like to thank Professors A. Carocca, R. Rodr\'iguez for helpful advice and questions, and M. Izquierdo, who helped me to present this work. I also express my acknowledgments to Linkoping University, where the final version of this paper was written. 
\end{acknowledgements}



\end{document}